\documentclass[12pt]{article}
\usepackage{mathrsfs}
\usepackage{amssymb}
\usepackage{amsthm}
\usepackage{graphicx}
\usepackage{authblk}
\usepackage{url}

\usepackage[ruled,lined,linesnumbered,commentsnumbered]{algorithm2e}

\newtheorem{theorem}{Theorem}
\newtheorem{lemma}[theorem]{Lemma}
\newtheorem{corollary}[theorem]{Corollary}
\newtheorem{proposition}[theorem]{Proposition}

\newtheorem{problem}[theorem]{Problem}
\newtheorem{remark}[theorem]{Remark}

\theoremstyle{definition}
\newtheorem{definition}{Definition}

\newcommand{\gtt}{$(\gamma_t - \tau)$}

\begin{document}

\title{A characterization of trees having a minimum vertex cover which is also a minimum total dominating set\thanks{The authors thank the financial support received from
Grant UNAM-PAPIIT IN-114415 and SEP-CONACyT.
Also, the first author would like to thank the support of
the Post-Doctoral Fellowships program of DGAPA-UNAM.}}

\author{C\'esar Hern\'andez-Cruz\thanks{email: cesar@matem.unam.mx (Corresponding Author)} \\ {\small Departamento de Matem\'aticas, Facultad de Ciencias, \\ Universidad Nacional Aut\'onoma de M\'exico, M\'exico} \and Magdalena Lema\'nska\thanks{magda@mif.pg.gda.pl} \\ {\small Department of Technical Physics and Applied Mathematics \\ Gdansk University of
Technology, Poland} \and  Rita Zuazua\thanks{ritazuazua@ciencias.unam.mx} \\ {\small Departamento de Matem\'aticas, Facultad de Ciencias, \\ Universidad Nacional Aut\'onoma de M\'exico, M\'exico}
}

\maketitle
\begin{abstract}
A vertex cover of a graph $G = (V, E)$ is a set
$X \subseteq V$ such that each edge of $G$ is
incident to at least one vertex of $X$. A dominating
set $D \subseteq V$ is a total dominating set of $G$
if the subgraph induced by $D$ has no isolated
vertices. A $(\gamma_t-\tau)$-set of $G$ is a minimum
vertex cover which is also a minimum total dominating
set. In this article we give a constructive characterization
of trees having a $(\gamma_t-\tau)$-set.
\end{abstract}

\section{Introduction}
\label{sec:intro}

Throughout this paper $G=(V,E)$ will be a finite, undirected,
simple and connected graph of order $n$.  The
\emph{neighborhood} of a vertex $v\in V$ is the set $N(v)$
of all vertices adjacent to $v$ in $G$. For a set  $X\subseteq
V,$ the \emph{open neighborhood}, $N(X)$, is defined to be
$\bigcup_{v\in X}N(v)$ and \emph{the closed neighborhood}
of $X$ is defined as $N[X]=N(X)\cup X.$ The degree of a
vertex $v\in V$ is $d(v)=|N(v)|$. A vertex $v\in V$ is an
\emph{end vertex} if $d(v)=1.$ A \emph{support vertex}, or
{\em support}, is the neighbor of an end vertex; a {\em strong
support vertex} is the neighbor of at least two end vertices.
For a set  $S\subseteq V,$ and $v \in S$, the {\em private
neighborhood} $\textnormal{pn}(v,S)$ of $v \in S$ is defined
by $\textnormal{pn}(v,S) = \{ u \in V \colon\ N(u) \cap S =
\{ v \} \}$.   Each vertex in $\textnormal{pn}(v,S)$ is called a
{\em private neighbor} of $v$.

A {\em vertex cover} of  $G$ is a set $X\subseteq V$ such
that each edge of $G$ is incident to at least one vertex of
$X$.  A minimum vertex cover is a vertex cover of smallest
possible cardinality. The \emph{vertex cover number} of
$G$, $\tau(G)$, is the cardinality of a minimum vertex cover
of $G$. A vertex cover of cardinality $\tau(G)$ is called a
$\tau(G)$-set.

The minimum vertex cover problem arises in various
important applications, including multiple sequence
alignments in computational biochemistry (see for
example \cite{pirzadaJKSIAM11}). In computational
biochemistry there are many situations where conflicts
between sequences in a sample can be resolved by
excluding some of the sequences. Of course, exactly
what constitutes a conflict must be precisely defined
in the biochemical context. It is possible to define a
conflict graph where the vertices represent the
sequences in the sample and there is an edge between
two vertices if and only if there is a conflict between the
corresponding sequences. The aim is to remove the
fewest possible sequences that will eliminate all
conflicts, which is equivalent to finding a minimum
vertex cover in the conflict graph $G.$ Several
approaches, such as the use of a parameterized
algorithm \cite{downeyTCS141} and the use of a
simulated annealing algorithm \cite{xuN69},
have been developed to deal with this problem.

A subset $D$ of $V$ is \emph{dominating} in $G$ if
$N[D]=V$. The \emph{domination number} of $G$,
denoted by  $\gamma (G)$, is the minimum cardinality
among all dominating sets in $G.$ A dominating set
$D$ is a \emph{total dominating set} of $G$ if the
subgraph $G[D]$ induced by $D$ has no isolates. In
\cite{cockayneN10}, Cockayne et al. defined the {\em
total domination number} $\gamma_{t}(G)$ of a graph
$G$ to be the minimum cardinality among all total
dominating sets of $G$. A total dominating set of
cardinality $\gamma_{t}(G)$ is called a
$\gamma_{t}(G)$-set.

A total vertex cover is a set which is both a total dominating set
and vertex cover.   In \cite{duttonBICA66}, Dutton studies
total vertex covers of minimum size. He proved that, in
general, the associated decision problem is
$\mathcal{NP}$-complete, and gives some bounds of the
size of a minimum total vertex cover of a graph $G$ in terms of
$\gamma_t (G)$ and $\tau (G)$; this parameter has received
some attention in recent years \cite{duttonDMGT33,li}.   In this
work, we explore a particular case of total vertex covers.   A
\gtt-{\em set} of $G$ is a total vertex cover which is both a
$\gamma_t (G)$-set and a $\tau (G)$-set.   While every graph
has a total vertex cover, by considering $K_2$, it is trivial to
observe that not every graph has a \gtt-set.   So, it is natural
to ask for a characterization of graphs having a \gtt-set.

Clearly, a graph $G$ having a \gtt-set also satisfies $\gamma_t
(G) = \tau (G)$; a graph satisfying this equation will be called
a \gtt-{\em graph}. Again, $K_2$ is an example of a graph which
is not a \gtt-graph, and so, the following question arises:   Does
every \gtt-graph contains a \gtt-set? Unfortuately, the answer is
no (consider the path on $8$ vertices, $P_8$).   So, another
natural problem to consider is to find a characterization of
\gtt-graphs.

Total domination in graphs is well described in
\cite{haynes1998} and recently in \cite{henningDM309} and
\cite{henning2013}. Among the different variants of domination,
total domination is probably the best known and the most widely
studied. Total domination has been successfully related to many
graph theoretic parameters \cite{henning2013}; in particular, an
additional motivation for this work is the following observation.
It is known that for every graph $G$, $\gamma (G) \le \alpha'
(G)$, where $\alpha' (G)$ is the matching number of $G$.
Nonetheless, neither $\alpha' (G)$ nor $\gamma_t (G)$
bounds the other one, and it is an interesting problem to find
families of graphs $G$ such that $\gamma_t (G) \le \alpha' (G)$,
\cite{henning2013}.   On the other hand, in \cite{hartnellCMJ45},
Hartnell and Rall characterized all the graphs $G$ such
that $\gamma (G) = \tau (G)$. Recalling that for every
bipartite graph $G$ we have $\tau (G) = \alpha' (G)$, it is
natural to consider the problem of characterizing bipartite
graphs $G$ such that $\gamma_t (G) = \tau (G)$.   Since
trees are the best-known bipartite graphs, the problem of
characterizing the trees $T$ such that $\gamma_t (G) =
\tau (G)$ seems to be a very good one.

A usual approach in the literature for characterizing
families of trees with a certain property is to consider a
constructive characterization.   First, a family $B$ of
trees having the property $P$ (where it is usually trivial to
verify it) is chosen as a (recursive) base, and then, some
operations preserving $P$ are introduced.   Finally, it is
proved that the family of trees having the property $P$ are
precisely those trees that can be constructed from a tree
in $B$ by recursive applications of the proposed operations.
This approach has been used extensively, to characterize,
for example, Roman trees \cite{henningDMGT22}, trees with
equal independent domination and restrained domination
numbers, trees with equal independent domination and
weak domination numbers \cite{hattinghJGT34}, trees with
equal independent domination and secure domination
numbers \cite{liIPL119}, trees with at least $k$ disjoint
maximum matchings \cite{slaterJCTB25}, trees with equal
$2$-domination and $2$-independence numbers
\cite{brauseDMTCS19}, trees with equal domination and
independent domination numbers, trees with equal
domination and total domination numbers
\cite{dorflingDM306}, etc.   In \cite{dorflingDM306}, a general
framework for studying constructive characterizations of
trees having an equality between two parameters is
discussed.

The main goal of this article is to provide a constructive
characterization of the trees having a \gtt-set.   For
unexplained terms and symbols we refer the reader to
\cite{haynes1998}. The rest of the paper is structured as
follows. In Section \ref{sec:dif} we present some basic
results that will be used in the rest of the paper; it is also
proved that the difference between $\gamma_t (G)$ and
$\tau (G)$ can be arbitrarily large.   Section \ref{sec:trees}
is devoted to prove our main result, we show that the family
of trees $T$ having a \gtt-set can be constructed through
four simple operations starting from $P_4$.   In the final
section some related problems are proposed.


\section{Basic results relating $\gamma_{t}(G)$ and
$\tau (G)$} \label{sec:dif}

In Section \ref{sec:trees} we will define four operations which
will be used to construct all the trees having a \gtt-set.   Such
operations will be defined using the following definition.

\begin{definition}
Let $G=(V(G), E(G))$ and $H=(V(H), E(H))$ be two disjoint
graphs, and let $u$ and $v$ be vertices in $V(G)$ and $V(H)$,
respectively.   The sum of $G$ with $H$ via the edge $uv$,
$G +_{uv} H$, is defined as $V(G +_{uv} H)=V(G) \cup V(H)$
and $E(G +_{uv} H) = E(G) \cup G(H)\cup \{uv\}$.

Moreover, if $H = K_1 = \{ v \}$, we say that we add $v$ to
$G$ supported by $u$.  
\end{definition}

Let $G$ and $H$ be two graphs with $u \in G$ and $v \in H$.
Notice that, regardless of the choice of $u$ and $v$, the
following inequalities are always satisfied: $$\max \{ \gamma_t
(G), \gamma_t (H) \} \le \gamma_t (G +_{uv} H) \le \gamma_t
(G) + \gamma_t (H),$$ $$\max \{ \tau (G), \tau (H) \} \le \tau
(G +_{uv} H) \le \tau (G) + \tau (H).$$   It is also worth noticing
that, for each of the previous four inequalities, there are
examples where they are strict, and examples where they are
equalities; we will come across them in the following sections.

We will now use the previously defined sum to prove that the
difference between $\gamma_t$ and $\tau$ can be arbitrarily
large, even for trees.

\begin{proposition} For any positive integer $k$ there exists a
tree $T_{(k)}$ such that $\tau(T_{(k)}) - \gamma_t(T_{(k)}) = k.$
\end{proposition}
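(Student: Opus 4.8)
The plan is to write down, for each $k$, one explicit caterpillar $T_{(k)}$ and to evaluate $\gamma_t$ and $\tau$ on it separately, so that $\tau - \gamma_t = k$ falls out by a single subtraction. The design idea is to force the total dominating set to be small while keeping the vertex cover large, which I arrange by using many support vertices that pair up \emph{among themselves} (so no extra vertex is needed to avoid isolated vertices in the dominating set) together with short bridges that contribute to cover and matching but are dominated from outside. Concretely, put $m = k+1$ and take $m$ disjoint copies of $P_4$, writing the $i$-th copy as $a_i' - a_i - b_i - b_i'$, so that $a_i, b_i$ are its two central vertices and $a_i', b_i'$ its two leaves. For $1 \le i \le m-1$ I join consecutive copies by introducing two fresh vertices $x_i, y_i$ together with the edges $b_i x_i$, $x_i y_i$, $y_i a_{i+1}$ (this may be read as repeated use of the sum $+_{uv}$, first attaching the bridge and then closing it). The result $T_{(k)}$ is a tree, indeed a caterpillar with spine $a_1 b_1 x_1 y_1 a_2 b_2 \cdots a_m b_m$.

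First I would determine $\gamma_t(T_{(k)})$. Every $a_i$ and every $b_i$ is a support vertex, carrying the leaf $a_i'$ respectively $b_i'$; since a leaf can only be totally dominated by its unique neighbour, each such support lies in \emph{every} total dominating set, whence $\gamma_t \ge 2m$. For the reverse inequality I check that $D = \{ a_i, b_i : 1 \le i \le m \}$ is itself a total dominating set: the edge $a_i b_i$ prevents $a_i$ and $b_i$ from being isolated in $D$, the leaves are dominated by their supports, and each bridge vertex $x_i$ (respectively $y_i$) is dominated by $b_i$ (respectively $a_{i+1}$). Hence $\gamma_t(T_{(k)}) = 2m$.

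Next I would compute $\tau(T_{(k)})$ by squeezing it between a matching and a cover of the same size. The edge set $\{ a_i a_i', b_i b_i' : 1 \le i \le m \} \cup \{ x_i y_i : 1 \le i \le m-1 \}$ is a matching of $T_{(k)}$, so any vertex cover must contain an endpoint of each of its $2m + (m-1) = 3m-1$ edges, giving $\tau \ge 3m-1$. On the other hand $\{ a_i, b_i : 1 \le i \le m \} \cup \{ x_i : 1 \le i \le m-1 \}$ is a vertex cover of size $3m-1$, so $\tau \le 3m-1$. Therefore $\tau(T_{(k)}) = 3m-1$, and $\tau - \gamma_t = (3m-1) - 2m = m - 1 = k$, as required.

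The routine parts are the two verifications, that $D$ dominates and that the displayed set covers every edge. The one step carrying real content is the matching lower bound for $\tau$, and the reason the matching beats $2m = \gamma_t$ is precisely that each bridge edge $x_i y_i$ is disjoint from the forced support set $D$ and is therefore a ``bonus'' edge not paid for by $D$. So the main obstacle is the bookkeeping showing that these $m-1$ bridge edges are simultaneously disjoint from the $2m$ leaf edges $a_i a_i', b_i b_i'$, since it is exactly this surplus that produces $m - 1 = k$; everything else reduces to direct checking.
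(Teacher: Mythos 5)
Your proposal is correct and constructs exactly the paper's tree: with $m=k+1$, your spine $a_1 b_1 x_1 y_1 a_2 b_2 \cdots a_m b_m$ with leaves on each $a_i, b_i$ is precisely the path $P_{4k+2}$ with pendant vertices attached at $v_1, v_2, v_5, v_6, \ldots, v_{4k+1}, v_{4k+2}$, so the approach is essentially identical. The only difference is that you supply the verification (forced supports for $\gamma_t \ge 2m$, and the matching--cover sandwich for $\tau = 3m-1$) that the paper states without proof.
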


\begin{proof}
Let $P_{4k+2}=(v_1,v_2,\ldots,v_{4k+2})$ be a path. Add
$2k+2$ new vertices to $P_{4k+2}$ supported by the $2k+2$
vertices  $\{ v_1, v_2, v_5, v_6, v_9, v_{10}, \ldots, v_{4k+1},
v_{4k+2} \}$.   The graph that we obtain is a tree $T_{(k)}$ such
that $\gamma_t(T_{(k)})=2k+2$, and $\tau(T_{(k)})=3k+2$.
Thus, we have $\tau(T_{(k)})-\gamma_t(T_{(k)})=k.$   See
Figure \ref{fig1}.
\end{proof}

\begin{figure}[h]
\centering
  \includegraphics[scale=0.75]{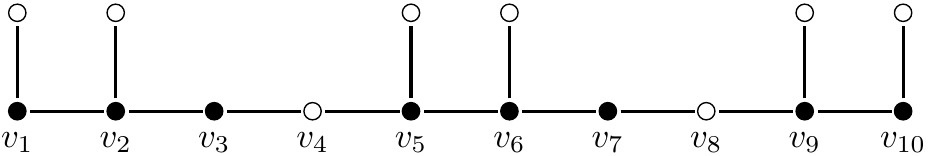}
 \caption{Example of $T_{(k)}$ with  $k=2$.} \label{fig1}
\end{figure}

\begin{proposition} For every positive integer $k$ there exists a
tree $T'_{(k)}$ such that $\gamma_t (T'_{(k)}) - \tau(T'_{(k)})=k.$
\end{proposition}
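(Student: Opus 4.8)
The plan is to exhibit an explicit spider-like tree in which $\gamma_t$ is forced to be large while $\tau$ stays small, mirroring the previous proposition but reversing the imbalance. For a fixed positive integer $k$, let $T'_{(k)}$ be the tree obtained from the star $K_{1,k+1}$ by subdividing each of its $k+1$ edges exactly twice; equivalently, $T'_{(k)}$ consists of a center $c$ together with $k+1$ pairwise disjoint legs, the $i$-th leg being a path $c - x_i - y_i - l_i$, so that each $l_i$ is an end vertex. I claim $\gamma_t(T'_{(k)}) = 2k+2$ and $\tau(T'_{(k)}) = k+2$.

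First I would compute $\tau(T'_{(k)})$. The set $\{c, y_1, \ldots, y_{k+1}\}$ is a vertex cover of size $k+2$, since $c$ covers every edge $cx_i$ and each $y_i$ covers both $x_i y_i$ and $y_i l_i$. For the matching lower bound I would exhibit $\{cx_1, y_1 l_1\} \cup \{x_i y_i : 2 \le i \le k+1\}$, which is a matching of size $k+2$. Since $T'_{(k)}$ is bipartite, $\tau = \alpha'$ (as recalled in the introduction), and the cover and matching have equal size, so $\tau(T'_{(k)}) = k+2$.

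Next I would compute $\gamma_t(T'_{(k)})$ by a forcing argument. Each end vertex $l_i$ has $y_i$ as its only neighbor, so every total dominating set $D$ must contain all of $y_1, \ldots, y_{k+1}$. Moreover, since the subgraph induced by $D$ has no isolated vertex, each $y_i$ needs a neighbor in $D$, and its only neighbors are $x_i$ and $l_i$; as the pairs $\{x_i, l_i\}$ are pairwise disjoint, this forces at least one additional vertex per leg, whence $|D| \ge 2(k+1)$. The set $\{x_i, y_i : 1 \le i \le k+1\}$ attains this bound and is indeed a total dominating set: the edges $x_i y_i$ remove all isolated vertices, $l_i$ is dominated by $y_i$, and $c$ is dominated by any $x_i$. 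Hence $\gamma_t(T'_{(k)}) = 2k+2$, and subtracting gives $\gamma_t(T'_{(k)}) - \tau(T'_{(k)}) = (2k+2) - (k+2) = k$.

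The two explicit sets are routine; the only real obstacle is making the $\gamma_t$ lower bound airtight, that is, arguing that the $k+1$ forced totalizers cannot be shared. This is exactly where the double subdivision is essential: it guarantees that no two supports $y_i, y_j$ are adjacent, and more importantly that no single vertex (in particular the center $c$) is adjacent to two distinct supports, so the legs cannot pool a common partner the way the center does in a once-subdivided star (where the difference collapses to $1$). As a consistency check on the general identity $\gamma_t - \tau = m-1$ for $m$ legs, I would verify the boundary case of a single leg, namely $P_4$, for which $\gamma_t = \tau = 2$.
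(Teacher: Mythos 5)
Your proof is correct, but your witness family differs from the paper's. The paper builds $T'_{(k)}$ as a caterpillar: it takes a path $P_{4k-1}=(v_1,\dots,v_{4k-1})$ and attaches a pendant leaf to each of the $2k$ odd-indexed vertices, then asserts $\gamma_t = 3k$ and $\tau = 2k$ (the verification is left implicit, with a figure). You instead use a spider, the star $K_{1,k+1}$ with every edge subdivided twice, and you carry out the full verification: the vertex cover number is pinned down by exhibiting a cover and a matching of equal size $k+2$ (using $\tau=\alpha'$ for bipartite graphs, which the paper indeed recalls in its introduction, though the universally valid inequality $\tau\ge\alpha'$ would suffice), and the total domination number is pinned down by the forcing argument that every total dominating set must contain each support $y_i$ together with one of its private neighbors $\{x_i,l_i\}$, these $k+1$ pairs being disjoint. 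What your approach buys is a self-contained, airtight computation -- the lower bounds come from a matching certificate and from disjoint forced pairs, with no case analysis along a long path -- and your construction is arguably more symmetric and easier to check. What the paper's approach buys is stylistic uniformity with the preceding proposition (where $\tau-\gamma_t=k$ is realized by a similarly built caterpillar), so the two examples read as one construction template with the attachment pattern flipped. Both choices are equally valid proofs of the statement.
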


\begin{proof} 
Let $P_{4k-1}=(v_1,v_2,\ldots,v_{4k-1})$ be a path. Add $2k$
new vertices to $P_{4k-1}$ supported by the vertices with odd
index. The graph that we obtain is a tree $T'_{(k)}$ such that
$\gamma_t(T'_{(k)})=3k, \tau(T'_{(k)})=2k$.   Hence, $\gamma_t
(T'_{(k)})-\tau(T'_{(k)})=k.$   See Figure \ref{fig2}.

\end{proof}

\begin{figure}[h]
\centering
  \includegraphics[scale=0.75]{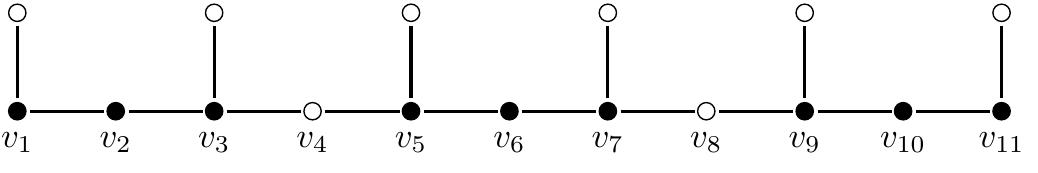}
 \caption{Example of $T'_{(k)}$ with $k=3$.} \label{fig2}
\end{figure}

The following simple remark will be useful in the proof
of our main result.

\begin{remark}  \label{noleafs}
Let $G$ be a graph with at least three vertices.   If $G$ is not a
star, then there exists a minimum total dominating set $D
\subseteq V(G)$ such that $D$ contains no end vertex of $G$.
\end{remark}

\begin{proof}
Let $D$ be a $\gamma_t (G)$-set and $x$ an end vertex of $G$
such that $N(x)=\{y\}$. Then $D-\{x\}\cup \{z\}$ is a total
dominating set  of $G,$ where $z\in N(y)$ is not an end vertex
of $G$.
\end{proof}

Our next result will also be very useful in the following section.

\begin{lemma}\label{lemat1}
If $\gamma_t(G)=\tau(G)$ and $D$ is a \gtt-set of $G$,
then $D$ contains no end vertex of $G.$ 
\end{lemma}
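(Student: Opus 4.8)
The plan is to argue by contradiction. Suppose $D$ is a \gtt-set that does contain an end vertex $x$, and let $y$ be its unique neighbour, so that $N(x) = \{y\}$. The first step is to locate $y$ inside $D$: since $D$ is a total dominating set, the induced subgraph $G[D]$ has no isolated vertices, so $x$ must have a neighbour in $D$; as $y$ is the only neighbour of $x$ in $G$, this forces $y \in D$.

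The second step exploits the fact that $x$ has thereby become redundant for the covering property. Consider $D' = D \setminus \{x\}$. The only edge of $G$ incident with $x$ is $xy$, and this edge is already covered by $y \in D'$; every other edge of $G$ is untouched by the removal of $x$ and is therefore still covered by $D'$. Hence $D'$ is again a vertex cover of $G$, but $|D'| = |D| - 1 = \tau(G) - 1$, contradicting the minimality of $\tau(G)$. This contradiction shows that $D$ can contain no end vertex.

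I do not expect any serious obstacle here, as the argument is short. The only point requiring care — and the genuine content of the lemma — is the observation in the first step that the no-isolated-vertices condition of total domination drags the support vertex $y$ into $D$. It is precisely this that makes the leaf $x$ dispensable for covering edges, rendering its presence in a \emph{minimum} vertex cover impossible. I would also remark that the hypothesis $\gamma_t(G) = \tau(G)$ enters only implicitly, through the existence of the \gtt-set $D$; the argument itself uses only that $D$ is simultaneously a total dominating set and a minimum vertex cover.
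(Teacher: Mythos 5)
Your proposal is correct and follows essentially the same argument as the paper: total domination forces the support vertex $y$ into $D$, making the leaf $x$ removable from the vertex cover and contradicting minimality. Your closing remark that the equality $\gamma_t(G)=\tau(G)$ is used only through the existence of $D$ (and the contradiction is really with the minimality of the cover) is a fair, slightly cleaner framing of the same proof.
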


\begin{proof} 
Let $D \subseteq V(G)$ be a ($\gamma_t - \tau$)-set  of $G$.
If $D$ contains an end vertex $x$, then, since $D$ is a total
dominating set, it follows that there exists a vertex $y \in D
\cap N_G (x)$.   This implies that $D \setminus \{ x \}$ is a
vertex cover of $G$, a contradiction to the assumption that
$\gamma_t (G) = \tau (G)$.
\end{proof}

As we mentioned in the introduction, not every tree contains
a \gtt-set.   The smallest tree having a \gtt-set is $P_4$, which
also happens to be the smallest \gtt-tree.   But not every
\gtt-tree contains a \gtt-set.   Actually, it is not hard to find an
infinite class of \gtt-trees not having a \gtt-set, the most simple
one is the family of paths $P_{4k}$, for $k \ge 2$.   Thus, the
class of tress having a \gtt-set is properly contained in the class
of \gtt-trees.

Given a class of graphs, it is common in graph theory to aim for
a characterization in terms of a set of forbidden induced 
subgraphs, because such characterization directly implies
polynomial time recognition for the class. Unfortunately, neither
\gtt-trees, nor trees having a \gtt-set, admit a characterization
of this kind.   To prove this fact, consider the following construction.

Recall that the corona of a graph $G$ is the graph obtained from
$G$ by adding a new vertex $v'$ to $G$ supported by $v$, for
every vertex $v \in V(G)$.   If $H$ is the corona of the graph $G$,
then clearly $V(G)$ is a \gtt-set of $H$.   Hence, any graph
$G$ is an induced subgraph of a \gtt-graph (of a graph having a
\gtt-set), and thus, there exists no forbidden subgraph
characterization of \gtt-graphs (of graphs having a \gtt-set). 

In our next section, we will obtain a constructive characterization
of trees having a \gtt-set.   For this end, we finish this section
introducing a definition and proving a simple technical result.

\begin{definition}
Let $G$ be a graph and $S$ a $\gamma_t (G)$-set.   A vertex
$v$ is $S$-{\em quasi-isolated} if there exists $u \in S$ such
that $\textnormal{pn}(u,S) = \{ v \}$. A vertex $v$ is {\em
quasi-isolated} if it is $S$-quasi-isolated for some $\gamma_t
(G)$-set $S$.
\end{definition}

A vertex $v$ is a $2$-{\em support} if it is at distance two from
an end vertex. The next proposition shows that if a vertex is a
$2$-support, then it is not quasi-isolated.

\begin{proposition}\label{sup-qi}
Let $G$ be a graph and $v\in V$ a $2$-support. Then the
vertex $v$ is non-quasi-isolated.
\end{proposition}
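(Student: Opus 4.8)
The plan is to proceed by contradiction, exploiting the fact that the support vertex lying between $v$ and its nearby end vertex is forced into every total dominating set. Since $v$ is a $2$-support, I would fix an end vertex $x$ at distance two from $v$ and let $y$ be the vertex on the (unique length-two) path joining them, so that $v-y-x$ is a path and $N(x)=\{y\}$. Note that $v$, $y$ and $x$ are pairwise distinct, since they have pairwise distinct distances to $x$. Toward a contradiction, suppose $v$ is quasi-isolated: there exist a $\gamma_t(G)$-set $S$ and a vertex $u\in S$ with $\textnormal{pn}(u,S)=\{v\}$. In particular, $v\in\textnormal{pn}(u,S)$ gives $N(v)\cap S=\{u\}$.

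The first key step is the observation that $y\in S$. Indeed, $S$ is a total dominating set, so the end vertex $x$ must have a neighbor in $S$; but its only neighbor is $y$, and hence $y\in S$. Since $y$ is also a neighbor of $v$, I obtain $y\in N(v)\cap S=\{u\}$, and therefore $u=y$.

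The second key step is to produce a second private neighbor of $u=y$, contradicting $\textnormal{pn}(u,S)=\{v\}$. Because $y\in S$ and $N(x)=\{y\}$, we have $N(x)\cap S=\{y\}$, so $x\in\textnormal{pn}(y,S)$. As $x\neq v$, this yields $\{x,v\}\subseteq\textnormal{pn}(y,S)=\textnormal{pn}(u,S)$, contradicting the assumption that this private neighborhood equals the singleton $\{v\}$. Hence no such $S$ and $u$ exist, and $v$ is non-quasi-isolated.

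I do not anticipate a genuine obstacle in this argument; it is short and essentially forced by the structure around the end vertex. The only points needing care are verifying that $v$, $y$, $x$ are distinct (immediate from the distance conditions) and confirming that the forcing $y\in S$ holds for the specific minimum total dominating set $S$ under consideration, which it does because dominating the end vertex $x$ is unavoidable for any total dominating set.
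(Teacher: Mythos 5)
Your proof is correct and takes essentially the same approach as the paper's: both rest on the observations that the support $y$ must belong to every $\gamma_t(G)$-set $S$ (since it is the only neighbor of the end vertex $x$), that $x \in \textnormal{pn}(y,S)$, and that $y \in N(v) \cap S$ forces any candidate $u$ with $\textnormal{pn}(u,S)=\{v\}$ to equal $y$. The paper states this directly for every $S$ and every $u \in S$, while you organize it as a contradiction; the content is identical.
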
 
 
\begin{proof}
Let $x, y, v \in V$ be a leaf, a support and a $2$-support of $G$,
respectively, such that $y \in N(x) \cap N(v)$.   For every
$\gamma_t (G)$-set $S$,  $y \in S$, $v \in N(y)$ and $x \in
\textnormal{pn} (y,S)$, therefore for any $u \in S$,
$\textnormal{pn} (u,S) \ne \{ v \}$.   Hence, $v$ is not
quasi-isolated.
\end{proof}

\section{Trees having a \gtt-set} \label{sec:trees}

As discussed in the previous section, trees having a \gtt-set
do not admit a characterization through a set forbidden
subgraphs.   Following the usual approach in this kind of
situation, we will propose a set of operations preserving the
existence of a \gtt-set to obtain an infinite family of trees having
a \gtt-set, and then, we will prove that every tree having a \gtt-set
belongs to this family.

We define the family $\mathcal{T}$ of trees to consist of all trees
$T$ that can be obtained from a sequence $T_1, T_2, \dots, T_k$
of trees such that $T_1$ is the path $P_4$, $T=T_k$ and, if $k \ge
2$, $T_{i+1}$ can be obtained recursively from $T_i$ by one of
the following operations.

\begin{itemize}
	\item {\bf Operation $\mathcal{O}_1$}:  Consider $u\in V(T)$
		such that $u$ belongs to some \gtt-set. Let $v$ be a
		leaf of a path $P_4$. Then do the sum of $T$ with $P_4$
		via the edge $uv$.

	\item {\bf Operation $\mathcal{O}_2$}:   Let $u\in V(T)$ such
		that $u$ belongs to some \gtt-set. Then add a new vertex
		$v$ to $T$ supported by $u$.  
	
	\item {\bf Operation $\mathcal{O}_3$}:   Let $u\in V(T)$ such
		that $u$ belongs to some \gtt-set and $u$ it is not a
		quasi-isolated vertex. Let $P_2=(v,w)$ be a path with
		two vertices.  Then do the sum of $T$ with $P_2$ via
		the edge $uv$.
		 
	\item {\bf Operation $\mathcal{O}_4$}:   Let $u\in V(T)$ such
		that $u$ is not a quasi-isolated vertex of $T$. Let $v$ be
		a support vertex of a path $P_4$. Then do the sum of $T$
		with $P_4$ via the edge $uv$. 
\end{itemize}

Our next lemma is valid for any tree, not necessarily a tree in
$\mathcal{T}$.

\begin{lemma} \label{O_i}
Let $T$ be a tree.   If $T_i$ is a tree obtained from $T$ by an
operation $\mathcal{O}_i$, $1 \le i \le 4$, then:
\begin{enumerate}
	\item $\gamma_t (T_1) = \gamma_t (T) + 2$
		and $\tau (T_1) = \tau (T) + 2$;
	
	\item $\gamma_t (T_2) = \gamma_t (T)$ and
		$\tau (T_2) = \tau (T)$;

	\item $\gamma_t (T_3) = \gamma_t (T) + 1$
		and $\tau (T_3) = \tau (T) + 1$;
	
	\item $\gamma_t (T_4) = \gamma_t (T) + 2$
		and $\tau (T_4) = \tau (T) + 2$;
\end{enumerate}
and hence,  $\gamma_t (T) - \tau (T) = \gamma_t (T_i) - \tau (T_i)$,
for $1 \le i \le 4$.   In particular $\gamma_t (T) = \tau (T)$ if and only
if $\gamma_t (T_i) = \tau (T_i)$, for $1 \le i \le 4$.   
\end{lemma}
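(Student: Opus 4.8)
My plan is to treat all four operations uniformly as attaching a small gadget $H$ (a $P_4$, a $P_2$, or a $K_1$) to $T$ at the vertex $u$, and to establish each asserted equality by proving matching upper and lower bounds for $\gamma_t$ and for $\tau$ separately. Once the four numerical identities are in hand, the concluding assertions are immediate: subtracting the $\tau$-identity from the $\gamma_t$-identity gives $\gamma_t(T)-\tau(T)=\gamma_t(T_i)-\tau(T_i)$, and the \emph{in particular} statement is then just the special case where this common difference is $0$. The tools I expect to use throughout are the two chains of sum inequalities stated just before the difference propositions, Remark~\ref{noleafs} (to assume an optimal total dominating set avoids end vertices), Lemma~\ref{lemat1}, and the notion of quasi-isolation together with Proposition~\ref{sup-qi}.

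The $\tau$-identities are the easiest and I would dispatch them first by a disjointness argument. Writing $V(T_i)=V(T)\cup V(H)$ as a disjoint union, any minimum cover $X$ of $T_i$ splits as $X\cap V(T)$, which covers every edge of $T$ (so has size at least $\tau(T)$), and $X\cap V(H)$, which covers the internal edges of $H$ (so has size at least the cover number of that internal path); since the two pieces are disjoint this yields the lower bound, and the matching upper bound follows by taking a minimum cover of $T$ (containing $u$ when the operation guarantees $u$ lies in a $(\gamma_t-\tau)$-set) and adjoining the obvious internal cover of the gadget. The $\gamma_t$ \emph{upper} bounds are similarly clean: I would start from a $\gamma_t(T)$-set $D$ of $T$ — taken to contain $u$ for $\mathcal{O}_2$ and $\mathcal{O}_3$, which is exactly where the hypothesis ``$u$ belongs to some $(\gamma_t-\tau)$-set'' is spent — and adjoin the forced interior gadget vertices (the two central vertices $\{a_2,a_3\}$ of the attached $P_4$ for $\mathcal{O}_1,\mathcal{O}_4$, the single vertex $v$ for $\mathcal{O}_3$, and nothing for $\mathcal{O}_2$), checking directly that the result totally dominates $T_i$ with the claimed cardinality.

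The heart of the argument is the $\gamma_t$ \emph{lower} bounds. For each operation I would take a minimum total dominating set $D'$ of $T_i$ which, by Remark~\ref{noleafs}, contains no end vertex; this forces the interior gadget vertices into $D'$ and excludes the gadget leaves, so that $W:=D'\cap V(T)$ has the expected cardinality and totally dominates every vertex of $V(T)\setminus\{u\}$. In the \emph{good case}, where $u$ still has a neighbor inside $W$, the set $W$ is already a total dominating set of $T$ of the right size and we are done. For $\mathcal{O}_2$ the good case always holds (the added leaf is excluded from $D'$), and for $\mathcal{O}_1$ the apparent \emph{boundary case} — where $D'$ dominates $u$ only through the pendant path — is harmless: it forces an \emph{extra} gadget vertex into $D'$, and the resulting slack lets me re-anchor $u$ to any neighbor $p\in N_T(u)$, which is automatically non-isolated because $W$ totally dominates $V(T)\setminus\{u\}$, so $W\cup\{p\}$ is a total dominating set of $T$ of size at most $|D'|-2$.

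The genuine difficulty, and the place where the non-quasi-isolation hypotheses are indispensable, is the boundary case for $\mathcal{O}_3$ and $\mathcal{O}_4$: there the interior gadget vertices supply no slack, so if $D'$ reached $u$ only through its gadget support there would be no room to reattach $u$ inside $T$ without losing a vertex. My plan is to rule this configuration out. Assuming it occurs, a short minimality argument shows that $\{u\}$ together with its gadget support forms an isolated component of $D'$ in $T_i$, and hence either $u$ is superfluous (contradicting minimality) or $u$ privately dominates a vertex of $T$; replacing the gadget support of $u$ by a suitable neighbor $p\in N_T(u)$ then produces a $\gamma_t(T)$-set in which $u$ is the \emph{unique} private neighbor of $p$, i.e. $u$ is quasi-isolated — contradicting the hypothesis of the operation. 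I expect this exchange argument, and the bookkeeping of the sub-cases $u\in D'$ versus $u\notin D'$, to be the main obstacle; it is worth double-checking consistency against Proposition~\ref{sup-qi}, since the operations are precisely arranged so that the attachment vertex is never quasi-isolated. With the boundary case excluded, the good case applies and the lower bounds follow, completing all four identities and therefore the difference-invariance and the final equivalence.
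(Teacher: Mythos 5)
Your proposal is correct and follows essentially the same route as the paper: the upper bounds come from the same augmented sets, and the crucial $\gamma_t$ lower bounds for $\mathcal{O}_3$ and $\mathcal{O}_4$ rest on exactly the paper's key exchange — if the bound failed, swapping the gadget support of $u$ for a neighbor $p \in N_T(u)$ yields a $\gamma_t(T)$-set in which $\textnormal{pn}(p,\cdot) = \{u\}$, so $u$ would be quasi-isolated, contradicting the operation's hypothesis. Your only departures are minor refinements rather than a different argument: the disjoint-decomposition proof of the $\tau$ lower bounds (where the paper argues by contradiction) and the systematic use of Remark~\ref{noleafs} in all four cases (the paper invokes it only for $\mathcal{O}_3$, and is in fact slightly sloppier for $\mathcal{O}_4$, where your normalization cleanly justifies $D' \cap V(P_4) = \{v,y\}$).
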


\begin{proof}
Observe that for $1\le i \le 4$, $\gamma_t (T_i) \geq \gamma_t
(T)$ and  $\tau (T_i)\geq \tau (T)$. We consider four cases.

\begin{itemize}

\item Suppose $i = 1$, $P_4=(v,x,y,z)$ and $T_1=T+_{uv}
	P_4.$ Let $S$ be a $\gamma_t(T)$-set (a $\tau (T)$-set,
	respectively). Then,   $S' = S \cup \{ x,y\}$ ($S' = S \cup
	\{ v,y\}$, resp.), is a total dominating set (vertex cover,
	resp.) of $T_1$.   Thus,  $\gamma_t (T_1) \le \gamma_t
	(T) + 2$ and  $\tau (T_1) \le \tau(T) + 2.$  

	For purposes of contradiction, let $D$ be a $\gamma_t
	(T_1)$-set such that $|D| \le \gamma_t (T) + 1.$ Define
	$S=D\cap V(P_4)$, then $2\leq |S|\leq 3.$ Suppose
	$|S|=2$, then $v\notin D$ and $D-S$ is a total dominating
	set of $T$ with cardinality less than or equal to  $\gamma_t
	(T) - 1.$ If $|S| = 3$, then $(D-S)\cup \{w\}$ for $w\in N_T(u)$
	is a total dominating set of $T$ with cardinality less than or
	equal to $\gamma_t (T) - 1.$ Therefore, $\gamma_t (T_1)
	= \gamma_t (T) + 2.$ 

	For purposes of contradiction, let $D$ be a $\tau (T_1)$-set
	such that $|D| \le \tau (T) + 1.$ Define $S=D\cap V(P_4)$,
	then $|S|=2$. Suppose $S = \{ x, y \},$ or $S=\{ x, z\}$ or $S =
	\{ v, y \}$, then $D-S$ is a vertex cover of $T$ with cardinality
	less than or equal to  $\tau (T) - 1.$ Hence, $\tau (T_1) =
	\tau(T) + 2.$     

\item For $i=2$ the proof is straightforward.

\item Suppose $i = 3$, $P_2=(v,w)$ and $T_3=T+_{uv}P_2.$
	Let $S$ be a $\gamma_t(T)$-set such that $u\in S$, then
	$S' = S \cup \{ v\}$ is a total dominating set of $T_3$.
	Similarly, if $S$ is a $\tau (T)$-set then $S' = S \cup \{ v \}$
	is a vertex cover of $T_3$.  Thus,  $\gamma_t (T_3) \le
	\gamma_t (T) + 1$ and  $\tau (T_3) \le \tau(T) + 1.$

	For purposes of contradiction, let $D$ be a $\gamma_t
	(T_3)$-set such that $|D|= \gamma_t (T)$ and there is
	not end vertex in $D$ (such set exists by Remark
	\ref{noleafs}). Then $D\cap V(P_2)=\{v\}$ and $u\in D$.
	Since  $|D-\{v\}|<\gamma_t (T)$, the set $D-\{ v\}$ is not
	a total dominating set of $T$. But, for all $z\in N_T(u)$,
	the set $D' = D - \{ v \} \cup \{ z \}$ is a $\gamma _t (T)$-set
	such that $u$ is $D'$-quasi-isolated, a contradiction. So,
	$\gamma_t (T_3)=\gamma_t (T) + 1$. 

	By definition of vertex cover, it is not posible that $\tau
	(T_3)=\tau(T)$, so $\tau (T_3)=\tau(T) + 1.$

\item Suppose $i=4$, $P_4 = (x, v, y, z)$ and $T_4 = T +_{uv}
	P_4.$ Let $S$ be a $\gamma_t(T)$-set (a $\tau (T)$-set,
	respectively). Then, $S' = S \cup \{ v,y\}$  is a total
	dominating set (vertex cover, resp.) of $T_4$.   Thus,
	$\gamma_t (T_4) \le \gamma_t (T) + 2$ and  $\tau (T_4)
	\le \tau(T) + 2.$  

	For purposes of contradiction, let $D$ be a $\gamma_t
	(T_4)$-set such that $|D| \le \gamma_t (T) + 1.$ Then $D
	\cap V(P_4)=\{v,y\}$. Since $|D-\{ v,y\}|\leq \gamma_t (T) -
	1$, the set $D-\{ v,y\}$ is not a total dominating set of $T.$
	But, for all $w \in N_T(u)$, the set $D' = D - \{ v, y \} \cup \{
	w \}$ is a $\gamma _t (T)$-set such that $u$ is
	$D'$-quasi-isolated, a contradiction. So, $\gamma_t (T_4)
	=\gamma_t (T) + 2$.
 
	By definition of vertex cover, it is not posible that $\tau (T_4)
	\le \tau(T) + 1$, so $\tau (T_3) = \tau(T) + 2.$ 
\end{itemize}
\end{proof}

\begin{corollary} \label{opgttset}
Suppose $T$ is a tree with  $D$ a ($\gamma _t-\tau $)-set of $T$. If
$T_i$ is a tree obtained from $T$ by an operation $\mathcal{O}_i$,
$1 \le i \le 4$,  then $T_i$ has a ($\gamma _t-\tau $)-set $D_i$. 
\end{corollary}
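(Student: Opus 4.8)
The plan is to treat the four operations uniformly. Since $D$ is a \gtt-set we have $|D| = \gamma_t(T) = \tau(T)$, and Lemma \ref{O_i} tells us both that $\gamma_t(T_i) = \tau(T_i)$ and the exact value of this common number. Consequently it suffices, in each case, to exhibit one set $D_i \subseteq V(T_i)$ that is \emph{simultaneously} a total dominating set and a vertex cover of $T_i$ and whose size equals $\gamma_t(T_i) = \tau(T_i)$; any such set is at once a minimum total dominating set and a minimum vertex cover, hence a \gtt-set.

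The first step is to fix a convenient starting set. For $\mathcal{O}_1$, $\mathcal{O}_2$ and $\mathcal{O}_3$ the hypothesis is that $u$ lies in \emph{some} \gtt-set, so I may assume, replacing $D$ if necessary, that $u \in D$. For $\mathcal{O}_4$ no membership condition on $u$ is imposed and I keep the given $D$. The second step is to write down the candidate sets, mirroring the upper-bound constructions already used in the proof of Lemma \ref{O_i}: with $P_4 = (v,x,y,z)$ put $D_1 = D \cup \{x,y\}$; put $D_2 = D$; with $P_2 = (v,w)$ put $D_3 = D \cup \{v\}$; and with $P_4 = (x,v,y,z)$ put $D_4 = D \cup \{v,y\}$. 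By construction $|D_i|$ equals $|D|+2$, $|D|$, $|D|+1$ and $|D|+2$ respectively, which by Lemma \ref{O_i} is exactly $\gamma_t(T_i)=\tau(T_i)$, so only the two structural properties remain to be checked.

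The third step is the verification that each $D_i$ is both a total dominating set and a vertex cover of $T_i$. Inside the attached path this is routine: the newly added vertices placed in $D_i$ are mutually adjacent (so none is isolated), every new vertex has a neighbour in $D_i$ (so all are dominated), and every new edge is incident with a vertex of $D_i$. On the old tree $T$ nothing changes, since $D \subseteq D_i$ already totally dominates and covers $T$.

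The only delicate point, and the one I expect to be the crux, is the bridge edge $uv$ together with the total-domination requirement that the vertex attached to it not become isolated. This is precisely where the choice $u \in D$ does its work: in $\mathcal{O}_1$ it covers $uv$; in $\mathcal{O}_2$ it both covers $uv$ and dominates the new leaf $v$; and in $\mathcal{O}_3$ it provides the neighbour in $D_3$ that keeps $v \in D_3$ from being isolated. Were $u$ outside $D$ one could not achieve both properties with a set of the minimum size. In $\mathcal{O}_4$ the added support vertex $v$ is itself placed in $D_4$ and is adjacent to the leaf $x$ and to $y \in D_4$, so $v$ simultaneously covers the bridge and is non-isolated with no condition on $u$; here the non-quasi-isolation hypothesis on $u$ is used by Lemma \ref{O_i} only to fix $\gamma_t(T_4)=\gamma_t(T)+2$, not in building $D_4$.
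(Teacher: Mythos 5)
Your proposal is correct and takes essentially the same route as the paper: the paper's proof exhibits exactly the same four sets, $D_1 = D \cup \{x,y\}$, $D_2 = D$, $D_3 = D \cup \{v\}$, $D_4 = D \cup \{v,y\}$, relying on Lemma \ref{O_i} for their minimality. If anything, you are more careful than the paper, which never says explicitly that for $\mathcal{O}_1$, $\mathcal{O}_2$, $\mathcal{O}_3$ one must take (or replace $D$ by) a \gtt-set containing $u$ so that the bridge edge $uv$ is covered and the new vertices are dominated; your proof makes that necessary adjustment explicit.
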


\begin{proof} Let $D$ be a \gtt-set of $T$. With the
notation of the above lemma, we have:

\begin{itemize}
\item If $i=1$ then $D_1=D\cup \{x,y\}$.
\item If $i=2$ then $D_2=D$.
\item If $i=3$ then $D_3=D\cup \{ v\}$.
\item If $i=4$ then $D_4=D\cup \{ v,y\}$.
\end{itemize}
\end{proof}

\begin{theorem}
If $T \in \mathcal{T}$, then $T$ is a ($\gamma_t-\tau$)-tree.  
\end{theorem}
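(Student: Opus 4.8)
The plan is to prove this by induction on the length $k$ of a construction sequence $T_1, T_2, \dots, T_k = T$ witnessing that $T \in \mathcal{T}$, with Lemma \ref{O_i} serving as the engine of the inductive step. Since, by definition, a \gtt-tree is precisely a tree satisfying $\gamma_t = \tau$, the entire goal reduces to establishing this single equality for every $T \in \mathcal{T}$.

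For the base case $k = 1$ I would simply record that $T = P_4$ and verify by direct inspection that $\gamma_t(P_4) = \tau(P_4) = 2$. Writing $P_4 = (a,b,c,d)$, the set of internal vertices $\{b,c\}$ is simultaneously a minimum total dominating set (it induces the edge $bc$, so no isolates, and it dominates all four vertices) and a minimum vertex cover (it covers the three edges $ab$, $bc$, $cd$, and no single vertex covers all of them). Hence $P_4$ is a \gtt-tree.

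For the inductive step I would assume $T_i$ is a \gtt-tree, that is $\gamma_t(T_i) = \tau(T_i)$, and that $T_{i+1}$ arises from $T_i$ by one of the operations $\mathcal{O}_1, \dots, \mathcal{O}_4$. Because the sequence witnesses membership in $\mathcal{T}$, each operation applied along the way automatically satisfies the side conditions encoded in its definition (the chosen vertex $u$ lies in some \gtt-set, or is non-quasi-isolated, as appropriate), so Lemma \ref{O_i} applies verbatim. Its closing assertion gives $\gamma_t(T_i) = \tau(T_i)$ if and only if $\gamma_t(T_{i+1}) = \tau(T_{i+1})$; combined with the inductive hypothesis this forces $\gamma_t(T_{i+1}) = \tau(T_{i+1})$, so $T_{i+1}$ is again a \gtt-tree. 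The induction then yields $\gamma_t(T) = \tau(T)$ for every $T \in \mathcal{T}$, which is the claim.

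I expect no genuine obstacle, since the technical work has already been localized in Lemma \ref{O_i}: the theorem is essentially a bookkeeping consequence of that lemma and the base computation on $P_4$. The only point requiring a moment's care is to confirm that the hypotheses of Lemma \ref{O_i} (and, implicitly, the well-definedness of each operation) are met at every step of a construction sequence for a tree in $\mathcal{T}$ — which holds precisely because those conditions are built into the definition of the family. As an alternative route yielding a formally stronger statement, one could instead iterate Corollary \ref{opgttset} starting from the \gtt-set $\{b,c\}$ of $P_4$, thereby exhibiting an explicit \gtt-set inside $T$; the equality $\gamma_t(T) = \tau(T)$ would then follow \emph{a fortiori}.
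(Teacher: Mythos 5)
Your proof is correct and matches the paper's approach: the paper likewise verifies $\gamma_t(P_4)=\tau(P_4)=2$ and then invokes Lemma \ref{O_i} (together with Corollary \ref{opgttset}) for a straightforward induction along the construction sequence. Your write-up merely spells out the bookkeeping that the paper leaves implicit, including the observation that the side conditions of each operation are guaranteed by the definition of $\mathcal{T}$.
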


\begin{proof}
Let $T=P_4$, then $\gamma_t (T) = \tau (T)=2$. By Lemma
\ref{O_i} and Corollary \ref{opgttset}, the proof is straightforward.   
\end{proof}

\begin{lemma} \label{quasi-isolated}
Let $T$ be a tree and $u$ a vertex in $T$.
\begin{enumerate}
	\item Let $P_2=(v,w)$ be a path of length two. Suppose
		that $u$ belongs to some $\gamma_t(T)$-set $D$
		of $T$ and define $T'$ to be the sum of $T$ with
		$P_2$ via the edge $uv$.   If $u$ is $D$-quasi-isolated,
		then $\gamma_t (T) = \gamma_t (T')$.
	
	\item Let $v$ and $w$ be the support vertices of a path
		$P_4$.   Define $T'$ to be the sum of $T$ with $P_4$
		via the edge $uv$.   If $u$ is a quasi-isolated vertex,
		then $\gamma_t (T) = \gamma_t (T') + 1$.
\end{enumerate}
\end{lemma}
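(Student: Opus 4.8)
The plan is to prove each statement by establishing matching lower and upper bounds on $\gamma_t(T')$, where the quasi-isolation hypothesis is exactly what lets the upper-bound construction save one vertex relative to the generic estimates of Lemma \ref{O_i}. For both parts the lower bound is the easier half and the upper bound is an explicit modification of a $\gamma_t(T)$-set; the single genuinely delicate point is obtaining the sharp lower bound $\gamma_t(T') \ge \gamma_t(T) + 1$ in part (2), since the generic sum inequality only yields $\gamma_t(T') \ge \gamma_t(T)$.

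For part (1), I would write the hypothesis as: there is $u' \in D$ with $\textnormal{pn}(u',D) = \{u\}$ (note $u' \ne u$, since $u \notin N(u)$). For the lower bound I would simply invoke the inequality $\gamma_t(T) \le \gamma_t(T +_{uv} P_2)$ recorded right after the definition of the sum. For the upper bound I would exhibit the set $D' = (D \setminus \{u'\}) \cup \{v\}$, of size $|D| = \gamma_t(T)$, and check it is a total dominating set of $T'$: the leaf $w$ and the vertex $v$ are handled because $v$ dominates $w$ while $u \in D'$ dominates $v$; the only vertex that could lose its dominator when $u'$ is deleted is a private neighbor of $u'$, and the unique such vertex is $u$, which is now dominated by the newly added $v$. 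Non-isolation is checked the same way. This gives $\gamma_t(T') \le \gamma_t(T)$, hence equality.

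For part (2), take $P_4 = (x,v,w,z)$ with leaves $x,z$ and supports $v,w$, and $T' = T +_{uv} P_4$. For the upper bound, choose a $\gamma_t(T)$-set $S$ witnessing that $u$ is quasi-isolated, say $\textnormal{pn}(u',S) = \{u\}$ with $u' \in S$, and set $S' = (S \setminus \{u'\}) \cup \{v,w\}$, of size $\gamma_t(T) + 1$. The verification that $S'$ totally dominates $T'$ is parallel to part (1): $\{v,w\}$ totally dominates the attached $P_4$ and dominates $u$, while deleting $u'$ only threatens its private neighbor $u$, which is rescued by $v$; one checks that $u'$ keeps a dominator and that no vertex of $S'$ becomes isolated. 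Thus $\gamma_t(T') \le \gamma_t(T) + 1$.

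The crux is the lower bound $\gamma_t(T') \ge \gamma_t(T) + 1$. Here I would fix, using Remark \ref{noleafs}, a $\gamma_t(T')$-set $D'$ containing no end vertex; then $x,z \notin D'$ while $v,w \in D'$ are forced (they are the only neighbors of the leaves $x,z$), so $D' \cap V(P_4) = \{v,w\}$ and $D := D' \cap V(T)$ satisfies $|D| = |D'| - 2$. Since $u$ is the only vertex of $T$ with a neighbor outside $V(T)$, every vertex of $T$ other than $u$ is totally dominated by $D$ within $T$, and the only vertex that can be undominated by $D$, or isolated in $T[D]$, is $u$ itself. If $u$ already has a neighbor in $D$, then $D$ is a total dominating set of $T$ and $\gamma_t(T) \le |D'| - 2$. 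Otherwise pick any neighbor $r$ of $u$ in $T$; the key observation is that $r$, having no neighbor in $P_4$, is totally dominated by $D' \cap V(T) = D$, so $r$ has a neighbor in $D$, whence $D \cup \{r\}$ is a total dominating set of $T$ of size $|D'| - 1$. In either case $\gamma_t(T) \le |D'| - 1 = \gamma_t(T') - 1$, which together with the upper bound yields $\gamma_t(T') = \gamma_t(T) + 1$. I expect this repair step — showing that a single added vertex suffices, via the fact that every neighbor of $u$ is already internally dominated — to be the main obstacle, and the place where connectivity of the tree and the single-edge nature of the sum are genuinely used.
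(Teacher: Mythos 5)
Your proof is correct and takes essentially the same approach as the paper: the paper's proof exhibits exactly your two swap sets, $(D\setminus\{u'\})\cup\{v\}$ and $(S\setminus\{u'\})\cup\{v,w\}$, and dismisses as ``easy to verify'' both the total-domination check and the minimality, which are precisely the verifications you spell out (note that your step ``$u\in D'$ dominates $v$'' in part (1) is licensed only by the lemma's hypothesis that $u$ itself lies in $D$, a hypothesis your opening paraphrase dropped; without it both the construction and the conclusion fail, e.g.\ for $T=P_5$ with $u$ an end vertex, where $u$ is quasi-isolated, yet $\gamma_t(P_7)=\gamma_t(P_5)+1$). One further point: the equality you actually prove in part (2), $\gamma_t(T')=\gamma_t(T)+1$, is the correct reading --- the lemma as printed has the equation reversed, which would contradict $\gamma_t(T)\le\gamma_t(T')$, and both the paper's own construction and the way the lemma is invoked in the proof of the main theorem agree with your version.
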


\begin{proof}
Let $D$ be a $\gamma_t (T)$-set such that $u$ is
$D$-quasi-isolated.   There exists $z \in D$ such that
$\textnormal{pn} (z, D) = \{ u \}$.  It is easy to verify that $D'
= (D \setminus \{ z \}) \cup \{ v \}$, is a $\gamma_t (T')$-set,
in the first case, and $D' = (D \setminus \{ z \}) \cup \{ v, w \}$
is a $\gamma_t (T')$-set for the second case.
\end{proof}

Our main result is the following.

\begin{theorem}
Let $T$ be a tree.   If $T$ has a $(\gamma_t-\tau)$-set,
then  $T \in \mathcal{T}$.
\end{theorem}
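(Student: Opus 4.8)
The plan is to prove the converse direction by induction on the order $n$ of a tree $T$ having a \gtt-set $D$. The base case is $n = 4$: since $P_4$ is the unique smallest tree with a \gtt-set (as noted after Lemma~\ref{lemat1}), we must argue that $P_4$ is in fact the only tree on four vertices admitting such a set, placing it in $\mathcal{T}$ by definition. For the inductive step, I would assume every tree of order less than $n$ that has a \gtt-set belongs to $\mathcal{T}$, and then show that $T$ can be obtained from a strictly smaller tree $T'$ (which I will argue also has a \gtt-set, hence lies in $\mathcal{T}$ by induction) via one of the operations $\mathcal{O}_1,\dots,\mathcal{O}_4$. This places $T$ in $\mathcal{T}$ and closes the induction.

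The core of the argument is choosing the right vertex at which to ``peel off'' a small subtree. The standard technique is to root $T$ at a leaf of a longest path and examine a vertex $u$ at maximum distance from the root whose children are all leaves or short branches; this forces the local structure near the deepest leaves into one of a few configurations. I would let $x$ be an end vertex on a longest path, $y$ its support, and analyze the subtree hanging below $y$ (and the next vertex up the path) according to how many leaves $y$ supports and the length of the attached branches. By Lemma~\ref{lemat1}, the \gtt-set $D$ contains no end vertex, so every support vertex lies in $D$ and every leaf is dominated by its support from within $D$; this tightly constrains the configurations. The aim is to identify, in each configuration, a pendant $P_4$, a pendant edge, a pendant $P_2$, or a $P_4$ attached at its support vertex, whose removal yields a tree $T'$ together with a candidate \gtt-set $D'$, and to check that the removed piece corresponds exactly to one of $\mathcal{O}_1$--$\mathcal{O}_4$ applied to $T'$ at the attachment vertex $u$.

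The delicate point — and the main obstacle — is verifying that the reduced tree $T'$ still \emph{has a \gtt-set}, and that the attachment vertex $u$ satisfies the side conditions the operations demand (that $u$ belong to some \gtt-set of $T'$ for $\mathcal{O}_1,\mathcal{O}_2$; that $u$ additionally be non-quasi-isolated for $\mathcal{O}_3$; and the quasi-isolation hypothesis on $u$ for $\mathcal{O}_4$). Here Lemma~\ref{O_i} is the quantitative engine: it guarantees $\gamma_t(T') = \tau(T')$ is equivalent to $\gamma_t(T) = \tau(T)$, so the equality of parameters transfers, but equality of parameters alone does not yield a \gtt-\emph{set} in $T'$. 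To get the actual set one must run the constructions of Lemma~\ref{O_i} in reverse: starting from $D$, delete the vertices of the removed piece and, when necessary, substitute a neighbor to restore total domination, obtaining a set $D'$ of the correct size $\tau(T')$ that is simultaneously a total dominating set and a vertex cover of $T'$. Lemma~\ref{quasi-isolated} and Proposition~\ref{sup-qi} are precisely the tools needed to check the quasi-isolation conditions: for instance, a $2$-support cannot be quasi-isolated, which rules out spurious configurations and pins $u$ down. I expect the bookkeeping of these configurations — especially distinguishing when $y$ is a strong support versus supporting a single leaf, and tracking which operation is the inverse of each removal — to be where the real work lies, while the parameter equalities themselves follow mechanically from the preceding lemmas.
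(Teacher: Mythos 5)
Your plan follows the same route as the paper's proof --- induction on $|V(T)|$ with base $P_4$, peeling a pendant piece off the deep end of a longest path, Lemma~\ref{lemat1} to keep end vertices out of $D$ (hence all supports in $D$), and the operations $\mathcal{O}_1$--$\mathcal{O}_4$ as inverses of the removals --- but it stops exactly where the proof begins, so as written it has a genuine gap: the case analysis is promised, never performed, and it is not routine bookkeeping. The paper first disposes of strong supports (delete one of the two leaves; the same $D$ is a \gtt-set of the smaller tree; reattach by $\mathcal{O}_2$), and then, with $P=(v_0,\dots,v_l)$ a longest path and $v_1,v_2\in D$, splits on a dichotomy your proposal never identifies: whether $N_T(v_2)\cap D$ contains a vertex other than $v_1$. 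Each branch needs explicit exchange arguments to pin down the structure. If such a vertex exists, one shows $d_T(v_2)>2$ (else $D\setminus\{v_2\}$ is a vertex cover, contradicting $\gamma_t(T)=\tau(T)$), removes $\{v_0,v_1\}$, and applies $\mathcal{O}_3$. If not, either $v_2$ is a support (remove $v_0,v_1,v_2$ and the leaf, apply $\mathcal{O}_4$) or $d_T(v_2)=2$, and then one must prove $d_T(v_3)=2$ and $d_T(v_4)>2$ --- e.g., if $d_T(v_3)>2$ there is a pendant path $(a,b,c)$ at $v_3$ with $b,c\in D$, and $(D\cup\{v_3\})\setminus\{v_2,c\}$ is a vertex cover smaller than $\tau(T)$, a contradiction --- before removing $\{v_0,v_1,v_2,v_3\}$ and applying $\mathcal{O}_1$. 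Without these arguments there is no reason the ``configurations'' you allude to reduce to the four removable pieces; deciding which vertex is the attachment point and which operation is its inverse is precisely what this dichotomy settles.

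Two of your specific suggestions also point in slightly wrong directions. Proposition~\ref{sup-qi} is not what verifies the non-quasi-isolation hypotheses of $\mathcal{O}_3$ and $\mathcal{O}_4$ (the paper never uses it in this proof); instead one argues by contradiction with Lemma~\ref{quasi-isolated}: if the attachment vertex were quasi-isolated in $T'$, that lemma would yield a value of $\gamma_t(T)$ incompatible with the count $\gamma_t(T')=\gamma_t(T)-1$ (resp.\ $\gamma_t(T)-2$) witnessed by the peeled set $D'$. Likewise, ``running Lemma~\ref{O_i} in reverse'' cannot by itself establish that $D'$ is a \gtt-set of $T'$: Lemma~\ref{O_i} presupposes that the operation's hypotheses hold at $u$ in $T'$, which is exactly what is being established, so the minimality of $D'$ in $T'$ must be checked directly (for instance, a minimum vertex cover of $T'$ together with $v_1$ covers $T$, giving $\tau(T')\ge\tau(T)-1=|D'|$). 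These points are fixable, but they constitute the substance of the proof rather than deferred details.
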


\begin{proof} By induction on $n=|V(T)|$. Since
$\gamma_t (T) = \tau (T),$ we have $n\geq 4.$ The
only tree $T$ with four vertices and equality
$\gamma_t (T) = \tau (T)$ is $P_4$, and $P_4\in
\mathcal{T}$. 

Let $T$ be a tree with $n > 4$ and let $D$ be a
($\gamma_t - \tau $)-set of $T$. If $T$ has a strong
support vertex $v$ with a leaf $u$, then $D$ is a
($\gamma_t - \tau $)-set of $T'=T-\{u\}$. By induction
hypothesis $T'\in \mathcal{T}$ and,  using  operation
$\mathcal{O}_2$ we have that $T \in \mathcal{T}$.
Therefore we can assume that there are no strong
support vertices in $T$.

Let $P=(v_0,\ldots, v_l)$ be a longest path in $T$.
Then $d_T(v_1)=2$ and by Lemma \ref{lemat1} the
vertices $v_1, v_2 \in D$. The proof of the theorem
follows to the next two claims.

\vspace{.2cm}

\textbf{Claim 1.} If there exists a vertex $x\in N_T(v_2)
\cap D$ such that $x\neq v_1$ then $d_T(v_2)\neq 2$
and $T \in \mathcal{T}$.

\vspace{.2cm}

Proof of Claim 1. Observe that $d_T (v_2) > 2$.
Otherwise, $d_T (v_2) = 2$ and hence $x=v_3$ and
$D-\{v_2\}$ is a vertex cover of $T$, contradicting
$\gamma_t(T)=\tau(T)$.   If $T' = T-\{v_0, v_1\}$, then
it is not hard to see that $D \setminus \{v_1\}$ is a
\gtt-set of $T'$. From the induction hypothesis $T'\in
\mathcal{T}$. For sake of contradiction, suppose that 
$v_2$ is quasi-isolated in $T'$.   By Lemma
\ref{quasi-isolated},  $\gamma_t (T) = \gamma_t (T')
= \gamma_t (T) - 1$, a contradiction. Therefore, $v_2$
is not quasi-isolated in $T'$, and using operation
$\mathcal{O}_3$, we have that $T\in \mathcal{T}$.

\vspace{.2cm}

\textbf{Claim 2.} If $N_T(v_2)\cap D=\{v_1\}$ then
$d_T(v_3)=2, d_T(v_4)>2$ and $T \in \mathcal{T}$. 

\vspace{.2cm}

Proof of Claim 2. If $N_T(v_2)\cap D=\{v_1\}$, then
$v_2$ is a support vertex or $d_T(v_2)=2.$ 


Observe that if $d_T(v_3)=1$, since $T$ does not
have strong support vertices, then $T=P_4.$ Therefore,
$d_T(v_3)\geq 2$. Since $D$ is a vertex cover of $T$
and $v_3\notin D$,  $|N_T(v_3)\cap D|\geq 2$.


Suppose $v_2$ is a support vertex and let $T'$ be the
tree $T' = T-\{v_0, v_1, v_2,x\},$ where $x$ is the leaf
neighbour of $v_2.$   The set $D'=D-\{v_1,v_2\}$ is a
($\gamma_t - \tau$)-set of $T'$ and, by the induction
hypothesis, $T'\in \mathcal{T}$. Notice that $v_3$ is
not a quasi-isolated vertex of $T'$, otherwise Lemma
\ref{quasi-isolated} would imply $\gamma_t (T) = \gamma_t
(T')+1$, but $\gamma_t (T') = \gamma (T) - 2$. Therefore,
$v_3$ is not a quasi-isolated vertex of $T'$, and we can
obtain $T$ from $T'$ using  operation $\mathcal{O}_4$
and $T\in \mathcal{T}$.



Now we may assume that $d_T (v_2) = 2$.   For purposes
of contradiction, suppose that $d_T(v_3)>2$.   Hence, there
is a path $P_3=(a,b,c)$ which is attached to $v_3$ by the
edge $cv_3$. Since $D$ is a $\gamma_t (T)$-set, we have
$b,c \in D$.     But then $(D \cup \{ v_3 \})  \setminus \{ v_2,
c \}$ is a vertex cover of $T,$ a contradiction. Thus $d_T
(v_3)=2$.


Since $D$ is a vertex cover of $T$ and $v_3 \notin D,$ we
have $v_4\in D$.  If $d_T(v_4)=1$, then $T=P_5$, and $D$
is not a $\gamma _t (T)$-set. If $d_T (v_4) = 2$, then $v_5
\in D$, in this case $(D \setminus \{ v_2, v_4 \}) \cup \{ v_3 \}$
is a vertex cover of $T$, a contradiction.  Hence, $d_T (v_4)
> 2.$ 


Define $T'$ as $T' = T - \{ v_0, v_1, v_2, v_3 \}$, the set 
$D'=D \setminus \{ v_1, v_2 \}$ is a \gtt-set of $T'$
containing $v_4$, and by the induction hypothesis,
$T'\in \mathcal{T}$.   Thus, we can obtain $T$ from
$T'$ using  operation $\mathcal{O}_1$ on $T \in
\mathcal{T}$. 

\end{proof}

Therefore, we have proved the following theorem.

\begin{theorem} \label{main}
It $T$ is a tree, then $T \in \mathcal{T}$ if
and only if $T$ has a \gtt-set. 
\end{theorem}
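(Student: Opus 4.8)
The plan is to establish Theorem~\ref{main} by combining the two directions that have already been set up. The ``if'' direction—that every tree constructed by the operations $\mathcal{O}_1,\dots,\mathcal{O}_4$ starting from $P_4$ has a \gtt-set—is essentially Corollary~\ref{opgttset} together with the observation that $P_4$ itself has the \gtt-set $\{v_2,v_3\}$. So the substance lies entirely in the converse: if a tree $T$ has a \gtt-set, then $T\in\mathcal{T}$. For this I would argue by induction on $n=|V(T)|$, exactly as the preceding theorem does, using Lemma~\ref{lemat1} (a \gtt-set avoids all end vertices) and Lemma~\ref{quasi-isolated} (the effect of the relevant sums on $\gamma_t$ when $u$ is quasi-isolated) to push the induction down while verifying at each step that the applied operation is legal.

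First I would dispatch the base case: since $\gamma_t(T)=\tau(T)$ forces $n\ge 4$, and the unique four-vertex tree with $\gamma_t=\tau$ is $P_4\in\mathcal{T}$, the statement holds for $n=4$. For the inductive step with $n>4$, I would fix a \gtt-set $D$ and first eliminate strong support vertices: if $v$ is a strong support with a leaf $u$, then $D$ remains a \gtt-set of $T-\{u\}$, so by induction $T-\{u\}\in\mathcal{T}$ and an application of $\mathcal{O}_2$ (legal because $u$ still belongs to a \gtt-set) gives $T\in\mathcal{T}$. Having reduced to the case of no strong supports, I would take a longest path $P=(v_0,\dots,v_l)$, noting $d_T(v_1)=2$ and (by Lemma~\ref{lemat1}) $v_1,v_2\in D$, and then split into the two cases governed by whether $N_T(v_2)\cap D$ contains a vertex other than $v_1$, i.e.\ Claim~1 versus Claim~2.

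The heart of the argument is showing that, after peeling off the appropriate end of the longest path to form the smaller tree $T'$, the ``attaching'' vertex ($v_2$, $v_3$, or $v_4$, depending on the case) satisfies the non-quasi-isolation hypothesis required to legally apply the corresponding operation. The technique is uniform and is the step I expect to be the main obstacle: one assumes for contradiction that the attaching vertex is quasi-isolated in $T'$, applies Lemma~\ref{quasi-isolated} to compute $\gamma_t(T')$ from $\gamma_t(T)$, and derives a numerical contradiction with the independently known value of $\gamma_t(T')$ coming from the fact that $D'$ is a \gtt-set of $T'$. Alongside this, each case requires several structural deductions using that $D$ is simultaneously a vertex cover and a $\gamma_t$-set—for instance, if $v_3\notin D$ then $D$ being a cover forces two neighbors of $v_3$ into $D$, and hypothetical extra branches at $v_3$ or $v_4$ are ruled out by exhibiting a strictly smaller vertex cover, contradicting $\gamma_t(T)=\tau(T)$.

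In Claim~1, where some $x\in N_T(v_2)\cap D$ with $x\ne v_1$ exists, I would first show $d_T(v_2)>2$ (else $x=v_3$ and $D\setminus\{v_2\}$ is a smaller cover), then set $T'=T-\{v_0,v_1\}$, check $D\setminus\{v_1\}$ is a \gtt-set of $T'$, confirm via the quasi-isolation argument that $v_2$ is non-quasi-isolated in $T'$, and apply $\mathcal{O}_3$. In Claim~2, where $N_T(v_2)\cap D=\{v_1\}$, the subcases are whether $v_2$ is a support or has degree two; in the former I remove $\{v_0,v_1,v_2,x\}$ and apply $\mathcal{O}_4$ at $v_3$, in the latter I first force $d_T(v_3)=2$ and $d_T(v_4)>2$ by the cover/minimality arguments above, then remove $\{v_0,v_1,v_2,v_3\}$ and apply $\mathcal{O}_1$ at $v_4$. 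In every branch the induction hypothesis yields $T'\in\mathcal{T}$ and the verified operation legality yields $T\in\mathcal{T}$, completing the induction and hence the theorem.
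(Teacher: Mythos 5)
Your proposal is correct and follows essentially the same route as the paper: the forward direction via Corollary~\ref{opgttset} from the base case $P_4$, and the converse by induction on $|V(T)|$, first removing leaves at strong supports with $\mathcal{O}_2$, then splitting on whether $N_T(v_2)\cap D=\{v_1\}$ along a longest path, and using Lemma~\ref{quasi-isolated} to derive the numerical contradiction that certifies non-quasi-isolation before applying $\mathcal{O}_3$, $\mathcal{O}_4$, or $\mathcal{O}_1$. The only slight imprecision is describing non-quasi-isolation as the uniform legality condition; for $\mathcal{O}_1$ at $v_4$ the required hypothesis is instead that $v_4$ lies in a \gtt-set of $T'$, which your argument does in fact verify.
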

\vspace{.3cm}


\section{Further work and open problems} \label{sec:gamma-tau}


Once we have characterized the trees having a
\gtt-set, the following natural step is to consider
the following problem.

\begin{problem} \label{prob:gtt}
Find a characterization for the ($\gamma_t - \tau$)-trees.
\end{problem}

If we let $\mathcal{T}'$ be the family of all \gtt-trees,
it is clear that the family $\mathcal{T}$, of all trees
having a \gtt-set, is contained in $\mathcal{T}'$.   We
have already observed in Section \ref{sec:dif}, that
this containment is proper.   Moreover, we can slightly
modify the operations $\mathcal{O}_1, \mathcal{O}_2$,
and $\mathcal{O}_3$ to preserve the equality
$\gamma_t = \tau$, but not necessarily preserving the
existence of a \gtt-set, thus obtaining a larger infinite
family of trees, say $\mathcal{S}$, such that $\mathcal{T}
\subset \mathcal{S} \subset \mathcal{T}'$.   The modified
operations for a tree $T$ are the following (notice the
relaxation of the choice of $u$, cf. Section \ref{sec:dif}).

\begin{itemize}
	\item {\bf Operation $\mathcal{O}'_1$}: Let $u$ be a vertex
		in $T$, and let $v$ be a leaf of a path $P_4$. Then do
		the sum of $T$ with $P_4$ via the edge $uv$.

	\item {\bf Operation $\mathcal{O}'_2$}:   Let $u\in V(T)$ such
		that $u$ belongs to some $\gamma_t (T)$-set and also
		belongs to some $\tau (T)$-set. Then add a new vertex
		$v$ to $T$ supported by $u$.  
	
	\item {\bf Operation $\mathcal{O}'_3$}:   Let $u\in V(T)$ such
		that $u$ belongs to some $\gamma_t (T)$-set and $u$
		it is not a quasi-isolated vertex. Let $P_2=(v,w)$ be a
		path with two vertices.  Then do the sum of $T$ with
		$P_2$ via the edge $uv$.
\end{itemize}

Notice that the family of paths of length $4k$,
$k \ge 2$, mentioned in Section \ref{sec:dif}
as an example of an infinite family of \gtt-graphs
not having a \gtt-set, can be obtained from $P_4$
by recursively applying operation $\mathcal{O}'_1$;
this shows that the inclusion $\mathcal{T} \subset
\mathcal{T}'$ is proper.   Similarly, examples can
be found of a tree $T'$ obtained from a tree $T$
by applying operation $\mathcal{O}_i$, $i \in \{ 2, 3
\}$, such that $T$ has a \gtt-set, but $T'$ does not.

Thus, the family $\mathcal{S}$ above defined is
a good starting point to look for the class of all
\gtt-trees.   It is worth noticing that there are many
ad-hoc operations that could be defined, both on
trees and general graphs, that preserve the
equality $\gamma_t = \tau$ (e.g., subdividing an
edge four times).   Nonetheless, there is no obvious
choice for a set of operations similar to the one
used to prove Theorem \ref{main}, that will lead
to a solution for Problem \ref{prob:gtt}.   Maybe,
instead of a characterization using a set of operations,
the following idea could be useful.   Consider two
\gtt-graphs, $G_i = (V_i, E_i)$, and $X_i \subseteq
V_i$, $i \in \{ 1, 2 \}$, we want to add some edges
joining the vertices of $X_1$ with the vertices of
$X_2$ so that the resulting graph is also a \gtt-graph.
What conditions do we need to achieve this goal?
Consider the following two examples.   First, if
$G_1$ is an empty graph, $|V_1| = |V_2|$, $X_i =
V_i$, for $i \in \{ 1, 2 \}$, and we add a perfect
matching between $X_1$ and $X_2$, we obtain
the corona of the graph $G_2$, which is a \gtt-graph.
Second, if $G_1$ is a $P_4$, $X_1$ is a singleton
containing an end-vertex of $G_1$, and $X_2$ is
a singleton containing any vertex of $G_2$, then
we are describing operation $\mathcal{O}'_1$, and
again, the resulting graph is a \gtt-graph.    These
two ``extremal'' cases, where $X_i$, $i \in \{ 1, 2 \}$
has the largest and smallest possible cardinalities,
respectively, seem to be the easiest to handle.   So,
another kind of recursive characterization could be
obtained if, for example, one could prove that every
\gtt-graph could be obtained by the sum via and
edge, from two smaller \gtt-graphs, or by adding
a perfect matching between two smaller \gtt-graphs.

From the computational point of view, for any tree $T$, both
$\gamma_t (T)$ and $\tau (T)$ can be determined in polynomial
time.   Hence, the problem of determining if $\gamma_t (T) = \tau
(T)$, for a tree $T$, is polynomial time solvable.   For the case of
trees having a \gtt-set, Theorem \ref{main} does not trivially
imply a polynomial algorithm to determine the existence of a
\gtt-set in a tree, so the following problem seems to be interesting.

\begin{problem}
Find the complexity of determining the existence of a
\gtt-set in a tree.
\end{problem}

Of course, it is also interesting to ask both problems for general
graphs.

\begin{problem}
For a given graph $G$:
\begin{itemize}
	\item Find the complexity of determining whether $\gamma_t (G)
		= \tau (G)$.
	\item Find the complexity of determining the existence of a \gtt-set
		in $G$.
\end{itemize}
\end{problem}

Our intuition says that the existence of a \gtt-set is so restrictive in
the structure of $G$ that the second problem might be solved in
polynomial time.

\bibliography{references}{}
\bibliographystyle{plain}
\end{document}